\documentclass[12pt,a4paper,draft,twoside,reqno]{amsart}
%
\usepackage{amsfonts,amssymb,euscript,amscd}
%
\pagestyle{headings}
%
\DeclareFontFamily{OML}{cyr}{} \DeclareFontShape{OML}{cyr}{m}{n}{
  <5> <6> <7> <8> <9> gen * wncyr
  <10> <10.95> <12> <14.4> <17.28> <20.74> <24.88> wncyr10
  }{}
\DeclareSymbolFont{rusletters}{OML}{cyr}{m}{n}
\DeclareSymbolFontAlphabet{\rusmath}{rusletters}
\DeclareMathSymbol\re{\rusmath}{rusletters}{"03}
%
\newtheorem{theorem}{Theorem}[] 

\newtheorem{lemma}[theorem]{Lemma}
\newtheorem{corollary}[theorem]{Corollary}

\newtheorem{example}[theorem]{Example}
%



%
\newcommand*{\R}{\mathbb R} 
\newcommand*{\1}{\mathop{\bf 1}\nolimits}

\begin{document}
\title[Linear differential operators of forth order]{Invariants of forth order linear differential operators}
\author[Valentin Lychagin]{Valentin Lychagin}
\address{University of Tromso, Tromso, Norway; Institute of Control Sciences of RAS, Moscow, Russia}
\email{Valentin.Lychagin@matnat.uit.no}
\author[Valeriy Yumaguzhin]{Valeriy Yumaguzhin}
\address{
Program Systems Institute of RAS, Pereslavl'-Zales\-skiy, Russia;  Institute of Control Sciences of RAS, Moscow, Russia}
\email{yuma@diffiety.botik.ru}
\thanks{V. Yumaguzhin is a corresponding author; phone: +79056368327, e-mail: yuma@diffiety.botik.ru }
\subjclass[2010]{Primary 05C38, 15A15; Secondary 05A15, 15A18}
\keywords{4th order linear partial differential operator, jet bundle, differential invariant, equivalence problem}

\begin{abstract}
In this paper, we study scalar the forth order linear differential operators  over an oriented 2-dimensional manifold. We investigate differential invariants of these operators and show their application to the equivalence problem. 
\end{abstract}

\maketitle

\section{ Introduction}
In the papers \cite{LY2, LY3, LYk}, we analyzed the equivalence of $k$-order linear differential operators acting on a line bundle over smooth $n$-dimensional manifold. In \cite {LY2}, we investigated the case of arbitrary $ n $ and $ k = 2 $, in \cite {LY3}, we investigated the case of $ n = 2 $ and $ k = 3 $, and in \cite {LYk}, we investigated the case $ n \ge 2 $ and $ k \ge 3 $ for constant type operators.

It is shown in \cite{LYk} that a stationary Lie algebra of symbol of regular operator is trivial at every point when $n \geq 2$ and $k \geq 3$ and therefore codimension of regular orbit of this symbol is 
\begin{equation}\label{CdmZrOrdOrb} 
c (n, k) =\binom{n+k-1}k - n^2.
\end{equation}

It is easy to check that $c (n, k)\geq n$ for all $n\geq 2$ and $k \geq3$, with the exception for the following three cases: 
\begin{gather*}
  n = 2, k = 3;\\ 
  n = 2, k = 4;\\ 
  n = 3, k = 3.
\end{gather*}
It follows, see \cite{LYk}, that the field of natural invariants of regular operator is generated by zero order invariants  in non exceptional cases.

The case $n=2$, $k=3$ was investigated in \cite{LY3}.   

 In this paper, we consider the case, $n=2$, $k=4$. In this case $c (2, 4)=1$. This means that there is a unique independent differential invariant of zero order for the regular symbols.  

Essentially, this invariant has long been known. Indeed, in the Hilbert lectures \cite{Hlbrt}, p. 57, two relative invariants for a forth degree homogeneous polynomial of two variables
$a_0x^4+4\,a_1x^3y+6\,a_2x^2y^2+4\,a_3x\,y^3+a_4y^4$  are found:
\begin{align*}
  &{\EuScript I}_2=a_0a_4-4a_1a_3+3a_2^2,\\
  &{\EuScript I}_3=a_0a_2a_4-a_0a_3^2-a_1^2a_4+2a_1a_2a_3-a_2^3.
\end{align*}
These relative invariants have weights 2 and 3 respectively. Hence,  
\begin{equation}\label{Inv_0}
   I_0={\EuScript I}_3^2/{\EuScript I}_2^3
\end{equation}
is an invariant for forth degree homogeneous polynomials in two variables. 

Thus $I_0$ is a rational invariant of a principal symbol of a forth order linear differential operator on two dimensional manifolds and hence $I_0$ is a zero order rational differential invariant of this operator.

We use this invariant in the following way.

If $I_0$ is not constant, then it generates the second differential invariant of 1-st order.  Using   both of these invariants as natural coordinates, we get the full classification of  4th order operators with the non constant invariant $I_0$ w.r.t. the group of diffeomorphism of the manifold.

If $I_0$ is constant, then we have the case of the constant type 4th order operators which is considered in \cite{LYk}. For such operators, there exist the Wagner connections in the manifold.
Using this connection, we reduce the classification problem for the constant type 4th order operators to the classification problem for set of four symmetric tensors of orders 4, 3, 2, 1, 0. This allows us to find the differential invariants for such kind of operators and solve the equivalence problem for them.

\subsection{Notations}

In this paper, we use the same notations as  in \cite{LY2,LY3,LYk}.

Let $M$ be $n$-dimensional manifold. 

Let $\tau : TM\to M$ and $\tau^* : T^*M \to M$ be respectively tangent and cotangent bundles over $M$, let $\Sigma_k (M) = C^{\infty}\big(S^k (\tau)\big)$ be  the module of symmetric $k$-vectors, $\Sigma^k (M) =C^{\infty}\big(S^k (\tau^*)\big)$  the module of symmetric $k$-forms,  $\Omega_k (M) =C^{\infty}\big(\Lambda^k(\tau)\big)$  the module of skew-symmet\-ric $k$-vectors, and $\Omega^k (M) =C^{\infty}\big(\Lambda^k(\tau^*)\big)$ the module of exterior $k$-forms. 

We denote by $\mathbf{Diff}_k (\1)$ a left $C^{\infty}(M)$- module of linear differential operators of order $\leq k$, acting on the trivial line bundle $\1:M\times\R\to M$, and by $\chi_k\! :{\rm Diff}_k (\1)\to M$, we denote the bundle of these differential operators, thus $C^{\infty}(\chi_k) = \mathbf {Diff}_k(\1)$.  

By $\mathcal{F}(M)$ we denote a multiplicative group of smooth functions on M without zeros on M, by $\mathcal{G}(M)$ will be denoted a group of diffeomorphisms of $M$. 

By ${\rm GL}(V)$ we denote a group of all linear transformations of vector space $V$.

By principal symbol $\sigma_k(A)$ of operator $A \in\mathbf {Diff}_k(\1)$ we mean the equivalence class
$$
\sigma_k(A)\equiv A\!\!\!\mod \mathbf{Diff}_{k-1}(\1)\in\Sigma_k(M).
$$

\section{Symbols of differential operators on 2-dimensional manifolds}

Let $M$ be an oriented 2-dimensional manifold and $x$, $y$ be local coordinates in $M$. 

An operator $A\in \mathbf {Diff}_4 (\1)$ has the following form in the coordinates $x,y$,
\begin{gather*}
   A=a_0\partial_x^4+4a_1\partial_x^3\partial_y+6a_2\partial_x^2\partial_y^2
+4a_3\partial_x\partial_y^3+a_4\partial_y^4\\
 +b_0\partial_x^3+3b_1\partial_x^2\partial_y+3b_2\partial_x\partial_y^2+b_3\partial_y^3
+c_0\partial_x^2+2c_1\partial_x\partial_y+c_2\partial_y^2\\
+d_0\partial_x+d_1\partial_y +e_0.
\end{gather*}

Its principal symbol $\sigma_4(A)\in\Sigma_4(M)$ is the following symmetric 4-vector in the coordinates $x,y$,
\begin{equation}\label{sgm}
   \sigma_4(A) =a_0\partial_x^4+4a_1\partial_x^3\cdot\partial_y+6a_2\partial_x^2\cdot\partial_y^2
+4a_3\partial_x\cdot\partial_y^3+a_4\partial_y^4,
\end{equation}
where we denoted by $\cdot$ the symmetric product and by $\partial_a^k$ the symmetric product of $k$ copies of $\partial_a$. 

In canonical coordinates $x, y,  p_x, p_y$ on $T^*M$ this tensor is a forth degree homogeneous  polynomial (Hamiltonian)
$$ 
  \sigma_4(A) = a_0p_x^4+4a_1p_x^3p_y+6a_2p_x^2p_y^2+4a_3p_xp_y^3+a4p_y^4
$$
on $T^*M$. 

There are three possibilities for roots of this polynomial: all roots are real, two roots are real and other are complex, and all roots are complex.

Let $\EuScript{D}(A)$ be a discriminant of $\sigma_4(A)$.  Then $\EuScript{D}(A)=0$ if and only if the symbol $\sigma_4(A)$ has multiple roots, or characteristics 

We say that the symbol $\sigma_4(A)$ is {\it regular} if $\EuScript{D}(A)\neq 0$. 

In this paper, we consider only regular operators, i.e., operators with regular symbols. 

One can check that: $\EuScript{D}(A)>0$ if and only if all roots of $\sigma_4(A)$ are distinct and real or all roots 
are distinct and complex, 

$\EuScript{D}(A)<0$ if and only if two roots of $\sigma_4(A)$ are distinct and real and other two are complex. 

\begin{lemma}$\phantom{\sigma}$
 \begin{enumerate}
  \item If symbol $\sigma_4(A)$ has two distinct real roots then there are local coordinates $x, y$ such that 
\begin{equation}\label{4RlRts}
 \sigma_4(A)=\partial_x\cdot\partial_y\cdot\big(\alpha_0\partial_x^2+2\alpha_1\partial_x\cdot\partial_y+\alpha_2\partial_y^2\big).
\end{equation}
 \item If symbol $\sigma_4(A)$ has complex root then there are local coordinates $x, y$, which we call isothermal coordinates, such that 
\begin{equation}\label{4CmplRts}
 \sigma_4(A)=(\partial_x^2+\partial_y^2)\cdot\big(\alpha_0\partial_x^2+2\alpha_1\partial_x\cdot\partial_y+\alpha_2\partial_y^2\big).
\end{equation}
\end{enumerate}
\end{lemma}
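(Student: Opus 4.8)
The statement asks us to normalize the principal symbol of a regular fourth-order operator by a choice of local coordinates. The key observation is that a regular symbol $\sigma_4(A)$, viewed as a quartic form (Hamiltonian) on $T^*M$, factors over $\C$ into a product of distinct linear factors, and grouping these factors in pairs produces two quadratic factors, i.e.\ two symmetric 2-vectors (quadratic Hamiltonians). Each such quadratic factor is, up to sign, a (possibly degenerate or indefinite) contravariant metric on $M$. The plan is to fix the coordinates using \emph{one} of these two quadratic factors and leave the other one to absorb the remaining coefficients $\alpha_0,\alpha_1,\alpha_2$.

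\smallskip

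For part (1): if $\sigma_4(A)$ has exactly two distinct real roots (and, by regularity, two complex-conjugate roots), then over $\R$ it factors as $Q_1\cdot Q_2$ where $Q_1=\lambda_1(p)\lambda_2(p)$ is a product of two distinct real linear forms and $Q_2$ is a positive-definite (or negative-definite) quadratic form in the $p$-variables. The factor $Q_1$ corresponds to a symmetric 2-vector of signature $(1,1)$, i.e.\ a Lorentzian contravariant metric on $M$, whose two (distinct, real, nowhere-proportional) null directions define two transversal foliations on $M$. First I would invoke the classical fact that in dimension $2$ a Lorentzian metric is locally flat up to conformal scaling after choosing coordinates along its null foliations: choosing $x$ constant along one null foliation and $y$ constant along the other, we get $Q_1 = f(x,y)\,\partial_x\cdot\partial_y$ for some nonvanishing function $f$. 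Then rescaling the coordinates (replacing $x$ by a new coordinate $\tilde x$ with $d\tilde x = g\,dx$, similarly for $y$) absorbs $f$, so that in the new coordinates $Q_1=\partial_x\cdot\partial_y$ exactly. The remaining quadratic factor is then some $\alpha_0\partial_x^2+2\alpha_1\partial_x\cdot\partial_y+\alpha_2\partial_y^2$ with smooth coefficients, giving \eqref{4RlRts}.

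\smallskip

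For part (2): if $\sigma_4(A)$ has a complex (hence non-real) root, then grouping the conjugate pair of that root's orbit gives a real quadratic factor $Q_1$ that is definite in the $p$-variables, i.e.\ a Riemannian contravariant metric on $M$ (up to overall sign). The classical existence of \emph{isothermal coordinates} for a Riemannian metric on an oriented surface (solving the Beltrami equation / using the Korn--Lichtenstein theorem) yields coordinates $x,y$ in which $Q_1$ is conformal to the standard one, $Q_1=f(x,y)(\partial_x^2+\partial_y^2)$; rescaling as above by a conformal change (which preserves the form $\partial_x^2+\partial_y^2$ up to a factor) lets us normalize $Q_1=\partial_x^2+\partial_y^2$, and again the complementary factor is $\alpha_0\partial_x^2+2\alpha_1\partial_x\cdot\partial_y+\alpha_2\partial_y^2$, giving \eqref{4CmplRts}.

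\smallskip

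The main obstacle is really just ensuring the factorization into \emph{real} quadratic factors is canonical and smooth: a priori the roots of the quartic could collide or the two real roots could merge with the complex pair, but regularity ($\D(A)\neq0$) rules this out, and $\D(A)>0$ versus $\D(A)<0$ distinguishes exactly the two cases in the statement (as noted just before the lemma). Smooth dependence of the quadratic factors on the point of $M$ follows because the discriminant is nonzero, so the map from quartics to ordered pairs of quadratic factors is locally a smooth (even algebraic) section away from the discriminant locus; the needed case is purely local. Once the appropriate quadratic factor $Q_1$ is extracted as a smooth (Lorentzian resp.\ Riemannian) metric, the coordinate normalization is the standard null-coordinate / isothermal-coordinate construction, and the rescaling to kill the conformal factor is elementary. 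I would therefore present the proof as: (i) factor $\sigma_4(A)=Q_1\cdot Q_2$ smoothly using regularity; (ii) recognize $Q_1$ as a metric of the appropriate signature; (iii) apply the standard normal-form theorem for that metric in dimension 2 and rescale; (iv) read off $Q_2$ as the remaining quadratic factor.
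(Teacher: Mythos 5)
The paper states this lemma without any proof, so there is no argument of the authors' to compare against; your plan is the natural one and is essentially sound. You use regularity ($\EuScript{D}(A)\neq 0$, hence four simple roots of the quartic Hamiltonian depending smoothly on the base point) to split $\sigma_4(A)$ locally and smoothly into two real quadratic factors $Q_1\cdot Q_2$, recognize the distinguished factor $Q_1$ (the product of two real linear factors, resp.\ of a complex-conjugate pair) as a contravariant metric of signature $(1,1)$, resp.\ a definite one, and put that metric into null, resp.\ isothermal, form. Two points need repair, though only one of them is substantive. First, the final rescaling step would fail as stated: after passing to null coordinates you have $Q_1=f(x,y)\,\partial_x\cdot\partial_y$, and a further change $\tilde x=\tilde x(x)$, $\tilde y=\tilde y(y)$ (the only coordinate changes preserving both null foliations) can only remove conformal factors of the split form $g(x)h(y)$; likewise, in the Riemannian case a further conformal change of coordinates cannot normalize the conformal factor to $1$ unless the metric $f^{-1}(dx^2+dy^2)$ happens to be flat. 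But no such normalization is needed: simply absorb $f$ into the complementary quadratic factor, $f\,Q_1'\cdot Q_2=Q_1'\cdot(f\,Q_2)$ with $Q_1'=\partial_x\cdot\partial_y$ or $\partial_x^2+\partial_y^2$; since the coefficients $\alpha_0,\alpha_1,\alpha_2$ in \eqref{4RlRts} and \eqref{4CmplRts} are arbitrary functions, this costs nothing. Second, you read case (1) as ``exactly two distinct real roots,'' whereas the statement (and the corollary following it, whose discriminant formula for \eqref{4RlRts} changes sign according to the nature of the roots of the $\alpha$-factor) is meant to cover the four-real-root case as well; your construction applies there verbatim --- locally choose, smoothly, any two of the four distinct real linear factors to form $Q_1$, and the remaining two real factors constitute $Q_2$.
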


Remark that if $z=x+iy$, $\bar z=x-iy$ are complex coordinates in a domain of the isothermal coordinates, then symbol \eqref {4CmplRts} has form
\begin{equation}\label{4CmplRtsZ}
 \sigma_4(A)=\partial_z\cdot\partial_{\bar z}\cdot\big(\alpha_0\partial_z^2+2\alpha_1\partial_z\cdot\partial_{\bar z}+\alpha_2\partial_{\bar z}^2\big).
\end{equation}
 similar to \eqref{4RlRts}.
\begin{corollary}$\phantom{\sigma}$
 \begin{enumerate}
  \item If symbol $\sigma_4(A)$ is defined by \eqref{4RlRts} or \eqref{4CmplRtsZ}, then its discriminant is defined up to a positive numerical factor by the formula
$$ 
  \EuScript{D}(A)=
   \alpha_0^2\alpha_2^2\,(9\,\alpha_1^2-16\,\alpha_0\alpha_2).
$$ 
  \item If symbol $\sigma_4(A)$ is defined by \eqref{4CmplRts}, then its discriminant is defined up to a positive numerical factor by the formula 
$$ 
  \EuScript{D}(A)=
  \big(4\,\alpha_1^2+(\alpha_0-\alpha_2)^2\big)^2(\alpha_0\alpha_2-\alpha_1^2).
$$ 
\end{enumerate}      
\end{corollary}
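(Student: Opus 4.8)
The plan is to work entirely in the Hamiltonian picture, where $\EuScript{D}(A)$ is the ordinary discriminant of the binary quartic $\sigma_4(A)$ in $p_x,p_y$, and to express that discriminant through Hilbert's relative invariants recalled in the Introduction. For a quartic $a_0p_x^4+4a_1p_x^3p_y+6a_2p_x^2p_y^2+4a_3p_xp_y^3+a_4p_y^4$, the invariants ${\EuScript I}_2=a_0a_4-4a_1a_3+3a_2^2$ and ${\EuScript I}_3=a_0a_2a_4-a_0a_3^2-a_1^2a_4+2a_1a_2a_3-a_2^3$ generate the whole ring of invariants, and the discriminant equals $c\,({\EuScript I}_2^3-27\,{\EuScript I}_3^2)$ for a nonzero constant $c$ --- a classical fact (cf.\ \cite{Hlbrt}). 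Hence everything reduces to reading off the coefficients $a_i$ from each normal form, substituting into ${\EuScript I}_2,{\EuScript I}_3$, and simplifying ${\EuScript I}_2^3-27\,{\EuScript I}_3^2$.

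For statement~(1) the symbol \eqref{4RlRts} is, as a quartic, $p_xp_y(\alpha_0p_x^2+2\alpha_1p_xp_y+\alpha_2p_y^2)$, so $a_0=a_4=0$ while $a_1,a_2,a_3$ are the coefficients $\alpha_0,\alpha_1,\alpha_2$ up to the fixed $\binom{4}{i}$ rescalings. When $a_0=a_4=0$ one checks the short identity
\begin{equation*}
  {\EuScript I}_2^3-27\,{\EuScript I}_3^2=4\,a_1^2a_3^2\,\big(9\,a_2^2-16\,a_1a_3\big),
\end{equation*}
and substituting the values of $a_1,a_2,a_3$ yields the asserted formula up to a positive numerical factor. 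The isothermal case \eqref{4CmplRtsZ} is then immediate: after passing to the conjugate momenta $p_z,p_{\bar z}$ this symbol has exactly the same algebraic shape $p_zp_{\bar z}(\alpha_0p_z^2+2\alpha_1p_zp_{\bar z}+\alpha_2p_{\bar z}^2)$, and $z,\bar z$ differ from $x,y$ by the constant linear change with Jacobian $-2i$, under which a quartic discriminant is multiplied by $(-2i)^{12}=2^{12}>0$. This is also the source of the phrase ``up to a positive numerical factor'': identifying a symmetric $4$-vector with a quartic polynomial and performing the coordinate reductions only ever multiplies $\EuScript{D}(A)$ by the twelfth power of a nonzero real number (for a diffeomorphism) or of $-2i$, both of which are positive.

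For statement~(2) the symbol \eqref{4CmplRts} is the quartic
\begin{equation*}
  (p_x^2+p_y^2)(\alpha_0p_x^2+2\alpha_1p_xp_y+\alpha_2p_y^2)=\alpha_0p_x^4+2\alpha_1p_x^3p_y+(\alpha_0+\alpha_2)p_x^2p_y^2+2\alpha_1p_xp_y^3+\alpha_2p_y^4,
\end{equation*}
now with all five coefficients present. Substituting into ${\EuScript I}_2,{\EuScript I}_3$ and expanding, one finds that with $A=\alpha_0\alpha_2-\alpha_1^2$ and $s=\alpha_0+\alpha_2$ the sixth-degree polynomial ${\EuScript I}_2^3-27\,{\EuScript I}_3^2$ collapses to $c\,A\,(4A-s^2)^2$; since $4A-s^2=-\big(4\alpha_1^2+(\alpha_0-\alpha_2)^2\big)$, this is precisely the stated formula up to a positive constant. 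A slicker variant factors $p_x^2+p_y^2=(p_x+ip_y)(p_x-ip_y)$ and uses the complex linear coordinates $w=p_x+ip_y$, $\bar w=p_x-ip_y$: the quartic then takes the shape $w\bar w\,(\beta_0w^2+2\beta_1w\bar w+\beta_2\bar w^2)$ of case~(1), so one only computes $\beta_0\beta_2$ and $\beta_1$ in terms of the $\alpha_i$, applies case~(1), and absorbs the positive twelfth-power factor once more.

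The computations are routine; the only real care is to track the $\binom{4}{i}$-normalization when passing between the symbol (written with symmetric products) and the quartic's coefficients $a_i$, and, in~(2), to organize the sixth-degree expansion so that the factorization $c\,A\,(4A-s^2)^2$ becomes visible. I expect that reorganization --- or, equivalently, the bookkeeping hidden in the $w=p_x+ip_y$ substitution --- to be the only mildly laborious step.
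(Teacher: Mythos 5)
Your overall strategy --- identify $\EuScript{D}(A)$ with the discriminant of the binary quartic in $p_x,p_y$ and use the classical identity $\Delta=c\,({\EuScript I}_2^3-27\,{\EuScript I}_3^2)$ --- is sound and is surely the intended route (the paper offers no proof of the Corollary). The constant is in fact $c=256>0$, so your "up to a positive factor" bookkeeping is legitimate; your identity ${\EuScript I}_2^3-27\,{\EuScript I}_3^2=4a_1^2a_3^2(9a_2^2-16a_1a_3)$ for $a_0=a_4=0$ is correct; and your case (2) checks out: with $A=\alpha_0\alpha_2-\alpha_1^2$ and $s=\alpha_0+\alpha_2$ one indeed gets ${\EuScript I}_2^3-27\,{\EuScript I}_3^2=\tfrac{1}{16}A(4A-s^2)^2$, which is the printed formula up to the positive factor $16$.

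The gap is the final step of case (1). Actually performing the substitution you describe --- from \eqref{4RlRts} one reads off $a_1=\alpha_0/4$, $a_2=\alpha_1/3$, $a_3=\alpha_2/4$ --- gives
$$
4a_1^2a_3^2\big(9a_2^2-16a_1a_3\big)=\tfrac{1}{64}\,\alpha_0^2\alpha_2^2\,(\alpha_1^2-\alpha_0\alpha_2),
$$
which is \emph{not} proportional to the asserted $\alpha_0^2\alpha_2^2(9\alpha_1^2-16\alpha_0\alpha_2)$; your claim that the substitution "yields the asserted formula up to a positive numerical factor" is false as written. (The printed formula is what one obtains by setting $a_1=\alpha_0$, $a_2=\alpha_1$, $a_3=\alpha_2$ \emph{without} the binomial rescaling, i.e.\ for the symbol $4\alpha_0\partial_x^3\cdot\partial_y+6\alpha_1\partial_x^2\cdot\partial_y^2+4\alpha_2\partial_x\cdot\partial_y^3$ rather than for \eqref{4RlRts}.) A normalization-free sanity check confirms the discrepancy: the quartic $p_xp_y(\alpha_0p_x^2+2\alpha_1p_xp_y+\alpha_2p_y^2)$ has a multiple root exactly when $\alpha_0=0$, $\alpha_2=0$, or $\alpha_1^2=\alpha_0\alpha_2$, so its discriminant must vanish on $\{\alpha_1^2=\alpha_0\alpha_2\}$ and not on $\{9\alpha_1^2=16\alpha_0\alpha_2\}$ --- note also that part (2) of the Corollary, which you verified, vanishes on $\alpha_1^2=\alpha_0\alpha_2$ for the same quadratic factor, so the two parts as printed are mutually inconsistent. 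Either the Corollary's formula (1) contains a misprint (in which case your method, carried through carefully, corrects it to $\alpha_0^2\alpha_2^2(\alpha_1^2-\alpha_0\alpha_2)$), or the intended parametrization differs from \eqref{4RlRts}; in either case your proof does not close, and it fails precisely at the $\binom{4}{i}$-normalization step you yourself singled out as the only delicate point.
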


\section{Invariants of non constant type differential operators}
\subsection{Constant type operators}
Let $V$ be a 2-dimensional vector space and let $\varpi\subset S^k(V)$ be a regular $\mathrm{ GL}(V)$-orbit (i.e., $\varpi$ is defined  by equations  $I_1=c_1,\ldots, I_m=c_m$, where $I_i$ are independent $\mathrm{GL}(V)$-invariants  in a neighborhood of $\varpi$, $c_i$ are constant, and $m$ is codimension of $\varpi$). 

Recall, see \cite{LYk}, that:
\begin{enumerate}
  \item a symbol $\sigma\in\Sigma_4(M)$ has a {\it constant type $\varpi$} if for any point $q\in M$ and any isomorphism $\varphi : T_q(M)\to V$ the image  $\varphi_* (\sigma)\in S^k(V)$ belongs to $\varpi$; 
\item an operator $A\in {\bf Diff}_4(\1)$ has the {\it constant type $\varpi$} if its symbol $\sigma_4(A)$ has the constant type $\varpi$.
\end{enumerate}
Remark that a symbol $\sigma\in\Sigma_4(M)$ has a constant type if and only if its zero order rational differential invariant $I_0(\sigma)$ is a constant.

\subsection{The bundle of differential operators} 
In the bundle\linebreak $\chi_4\!:\mathrm {Diff}_4(\1)\to M$ we will use the following canonical local coordinates $(x, y, u^{\alpha})$,
where $(x, y)$ are local coordinates in $M$ and $u^{\alpha}$ are fiber wise coordinates in bundle  $\chi_4$. Here $\alpha = (\alpha_1, \alpha_2)$ is the multi index of length $0\le |\alpha|=\alpha_1+\alpha_2 \le 4$.

In these coordinates the section 
$$
S_A:M\longrightarrow \mathrm{Diff}_4(\1) 
$$ 
that corresponds to operator 
$$
   A =\sum_{|\alpha|\le 4}a^{\alpha}(x,y)\partial_x^{\alpha_1}\partial_y^{\alpha_2}\in\mathbf{Diff}_4(\1),
$$
has the form
$$
    u^{\alpha} = a^{\alpha}(x,\, y).
$$

Denote by $\pi_l : J^l(\chi_4) \to M$ the vector bundles of $l$-jets of sections of bundles $\chi_4$, or, in other words, bundles of $l$-jets of the $4$-order scalar differential operators.

We will denote by $[A]^l_p$ $l$-jets of operators at a point $p\in M$.

Bundles $\chi_4$, as well as bundles $\pi_l$ are natural in the sense that the action of the diffeomorphism group $\mathcal{G}(M)$ is lifted to automorphisms of these bundles in the natural way:
$$
\varphi^{(l)} : [A]^l_p \mapsto [\varphi_*(A)]^l_{\varphi(p)}
$$
for any diffeomorphism $\varphi\in\mathcal{G}(M)$.

The total differential operator of order $4$, see \cite{LYk}, 
$$
\square : C^{\infty}\big(J^l(\chi_4)\big)\longrightarrow C^{\infty}\big(J^{l+4}(\chi_4)\big),
\;l = 0, 1, \ldots, 
$$
is defined by the formula 
$$ 
j_{4+l}(S_A)^*\big(\square(f)\big) = A\big(j_l (S_A)^*(f)\big),
$$ 
for all functions $f\in C^{\infty}\big(J^l(\chi_4)\big)$ and operators $A \in\mathbf{Diff}_4(\1)$.
It is easy to check that in the standard jet-coordinates in the bundles $\pi_l$ this operator has the
 following form
$$
\square =\sum_{|\alpha|\le 4}u^{\alpha}\frac{d^{|\alpha|}}{d_x^{\alpha_1}d_y^{\alpha_2}}.
$$

The main property of this operator is its naturality:
$$
\varphi^{(4+l)*} \circ\square = \square\circ \varphi^{(l)*}
$$
for all diffeomorphisms $\varphi\in\mathcal{G}(M)$.

Let $J = (J_1, J_2)$ be a pair of natural differential invariants. We say that they are {\it in general position} if
$$
\widehat{dJ_1}\wedge\widehat{dJ_2} \neq 0.
$$
Let $\EuScript I$ be an invariant, then
$$
\widehat{d\EuScript I} = \EuScript I_1\widehat{dJ_1} +\EuScript I_2\widehat{dJ_2},
$$
for some rational functions $\EuScript I_i$, which are called {\it Tresse derivatives}.
We will denote them by $\displaystyle\frac{d\EuScript I}{dJ_i}$. They are invariants by the construction, having, as a rule, higher order then the invariant $\EuScript I$.

\subsection{}
Let $A\in\mathbf{Diff}_4(\1)$ be a non constant type regular operator, \linebreak
$\sigma_4(A)\in\Sigma_4(M)$ be its symbol, and $I_0(A)$ be its zero order rational differential invariant defined in local coordinates $x, y$ of $M$  by \eqref{Inv_0},
$$ 
   I_0(A)={\EuScript I}_3^2/{\EuScript I}_2^3,
$$ 
where 
$$
\EuScript{I}_2=a_0a_4-4a_1a_3+3a_2^2,\quad
  \EuScript{I}_3=a_0a_2a_4-a_0a_3^2-a_1^2a_4+2a_1a_2a_3-a_2^3,
$$ 
and $a_0,\ldots, a_4$ are coefficients of $\sigma_4(A)$, see \eqref{sgm}.

Assume that
$$ 
  dI_0(A) \neq 0
$$  
in some open domain of $M$.

Then the convolution $\big\langle\, dI_0(A)^4,\,\sigma(A)\,\big\rangle$ of symmetric differential 4-form $dI_0(A)^4$ and  symbol $\sigma_4(A)$ is a first order rational differential invariant of $A$. We denote it by $I_1(A)$, 
\begin{equation}\label{Inv_1}
 I_1(A)=\big\langle\, dI_0(A)^4,\,\sigma(A)\,\big\rangle.
\end{equation} 

\subsection{The field  of all natural rational differential invariants of non constant type operators} 

We say that 2-jet $\theta_2\in J^2\chi_4$ is {\it regular} if $\widehat{d I_0}\wedge\widehat {dI_1}\neq 0$ at the point $\theta_2$. Denote by $\EuScript{S}_2\subset J^2\chi_4$ the set of all singular points of $J^2\chi_4$, i.e., 
$$
 \EuScript{S}_2 =\{\theta_2\in J^2\chi_4\,|\,(\widehat{dI_0}\wedge\widehat{dI_1})_{\theta_2}= 0\}.
$$

In the regular domain $J^2\chi_4\setminus\EuScript{S}_2$ the invariants $I_0$ and $I_1$ are in general position.

From \cite{LYk}, Theorem 11, we get the following statement .

\begin{theorem} The field of all natural rational invariants of non constant operators $A\in\mathbf{Diff}_4(\1)$ is generated by the invariants $I_0, I_1$, and the Tresse derivatives
$$
\frac{d^{|\beta|}J_{\alpha}}{dI_0^{\beta_1}dI_1^{\beta_2}}
$$
of invariants
$$
 J_{\alpha} =\square(I_0^{\alpha_1}\cdot I_1^{\alpha_2})
$$
with $0 \le |\alpha| \le 4$.

The field of rational natural invariants separates, see \cite{KL, Ros}, regular orbits in the jet spaces of differential operators of non constant type.
\end{theorem}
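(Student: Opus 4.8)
The plan is to derive the statement from Theorem~11 of \cite{LYk} by specializing it to $n=2$, $k=4$; what remains is to check that the hypotheses of that general Lie--Tresse result are met for non constant type operators, the rest being its built-in mechanism.

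First I would assemble the ingredients that make the machinery run. By the very definition of $\EuScript{S}_2$, the pair $(I_0,I_1)$ is in general position on the regular domain $J^2\chi_4\setminus\EuScript{S}_2$, and $\EuScript{S}_2$ is a proper closed (in fact algebraic) subset, so the regular domain is dense. Hence $I_0,I_1$ may serve as a Tresse coordinate system, and the Tresse derivatives $d/dI_0$, $d/dI_1$ span, over the field of rational natural invariants, the module of invariant derivations on the regular domain. That two functionally independent invariants, of orders $0$ and $1$, is exactly the right count here is forced by $\dim M=2$ together with $c(2,4)=1$: a single zero order invariant $I_0$ and, since $dI_0\neq 0$, a single new first order invariant $I_1$.

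Next comes the step I expect to carry most of the weight: showing that the fifteen invariants $J_\alpha=\square(I_0^{\alpha_1}\cdot I_1^{\alpha_2})$, $0\le|\alpha|\le 4$, together with $I_0$ and $I_1$, suffice to reconstruct the operator itself. Writing $\square=\sum_{|\beta|\le 4}u^{\beta}\,\frac{d^{|\beta|}}{d_x^{\beta_1}d_y^{\beta_2}}$ one gets, for each $\alpha$ with $|\alpha|\le 4$,
\begin{equation*}
 J_\alpha=\sum_{|\beta|\le 4}u^{\beta}\,\frac{d^{|\beta|}}{d_x^{\beta_1}d_y^{\beta_2}}\big(I_0^{\alpha_1}I_1^{\alpha_2}\big),
\end{equation*}
a linear system of fifteen equations in the fifteen coefficients $u^{\beta}$, $|\beta|\le 4$ (there are $\binom{6}{2}=15$ of them). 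Re-expressing the total derivatives $d_x,d_y$ through $d/dI_0,d/dI_1$ --- legitimate on the regular domain, where $\widehat{dI_0}\wedge\widehat{dI_1}\neq 0$ --- one verifies that the $15\times 15$ matrix of this system is non-degenerate outside a proper closed subvariety of each jet space; this Wronskian type rank computation is the core of Theorem~11 of \cite{LYk}, and it is what makes the range $0\le|\alpha|\le 4$ both necessary and sufficient. Solving the system expresses each $u^{\beta}$ as a rational function of $I_0,I_1$, the $J_\alpha$, and their Tresse derivatives; applying $d/dI_0,d/dI_1$ repeatedly then produces the total derivatives of the $u^{\beta}$ of all orders in the same terms. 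Consequently any natural rational differential invariant, restricted to the graph of a jet of a section $S_A$ and read in the coordinates $I_0,I_1$ on $M$, becomes a rational function of $I_0,I_1$, the $J_\alpha$, and their Tresse derivatives; since this identity holds for all $A$ in an open family, it holds as an identity of rational functions on the jet space, which is the assertion about generators.

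Finally, for the last sentence I would invoke the general theorem of Kruglikov--Lychagin \cite{KL} (see also \cite{Ros}) that the field of rational differential invariants of a natural bundle separates the regular orbits of the diffeomorphism group in its jet spaces; combined with the generating set just exhibited this yields separation of the regular orbits of $\mathcal{G}(M)$ in the jet spaces of non constant type $4$-th order operators. The main obstacle throughout is the middle step --- checking that the matrix built from $\square$ and the monomials in $I_0,I_1$ degenerates only on a proper subvariety, so that the operator, and hence every jet and every invariant, is genuinely recovered from $I_0,I_1$ and the $J_\alpha$.
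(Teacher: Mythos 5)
Your proposal follows the same route as the paper, which simply derives the statement by specializing Theorem~11 of \cite{LYk} to $n=2$, $k=4$ and offers no further argument. Your additional details --- the general-position check for $(I_0,I_1)$ on $J^2\chi_4\setminus\EuScript{S}_2$, the $15\times15$ linear system recovering the $u^{\beta}$ from the $J_\alpha$, and the appeal to \cite{KL,Ros} for separation of regular orbits --- correctly fill in the mechanism that the cited theorem encapsulates.
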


\section{Invariants of constant type differential operators}
\subsection{The Wagner connection}
\begin{theorem} A regular symbol $\sigma\in\Sigma_4$ has the constant type if and only if it has a linear connection $\nabla^{\sigma}$ in the tangent bundle to $M$ such that 
\begin{equation}\label{WgnrCnnct}
 \nabla^{\sigma}_X(\sigma)=0
\end{equation}
for all vector fields $X$ on $M$. 
\end{theorem}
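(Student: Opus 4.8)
The plan is to prove the two implications separately. The converse ($\Leftarrow$) is a short parallel‑transport argument, while the direct implication ($\Rightarrow$) reduces to a pointwise linear‑algebra statement about the infinitesimal orbit map of a regular binary quartic.

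$(\Leftarrow)$ Assume, after restricting to a connected component, that a connection $\nabla^{\sigma}$ with $\nabla^{\sigma}_X\sigma=0$ exists. Parallel transport $P\colon T_qM\to T_{q'}M$ along any path joining $q$ to $q'$ is then a linear isomorphism with $P_*(\sigma_q)=\sigma_{q'}$. Hence for any isomorphisms $\varphi\colon T_qM\to V$ and $\psi\colon T_{q'}M\to V$ the vectors $\varphi_*(\sigma_q)$ and $\psi_*(\sigma_{q'})$ lie in a common $\mathrm{GL}(V)$‑orbit $\varpi\subset S^4(V)$; since the orbit is $\mathrm{GL}(V)$‑invariant, this is independent of the choice of $\varphi,\psi$, so $\sigma$ has the constant type $\varpi$. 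This step is routine.

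$(\Rightarrow)$ Fix any auxiliary linear connection $\nabla^0$ on $TM$. Every connection on $TM$ is of the form $\nabla^0+\gamma$ with $\gamma\in\Omega^1(M,\mathrm{End}(TM))$, and, letting $\mathrm{End}(TM)$ act on $S^4(TM)=\Sigma_4$ by derivations, $(\nabla^0+\gamma)_X\sigma=\nabla^0_X\sigma+\gamma_X\cdot\sigma$, so I must solve $\gamma_X\cdot\sigma_q=-(\nabla^0_X\sigma)(q)$ in $\mathrm{End}(T_qM)$. Consider the infinitesimal orbit map $\mu_q\colon\mathrm{End}(T_qM)\to S^4(T_qM)$, $\mu_q(A)=A\cdot\sigma_q$. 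Its kernel $\ker\mu_q$ is the stationary Lie algebra of the regular symbol $\sigma_q$, which is trivial by \cite{LYk} (equivalently, a regular binary quartic has finite automorphism group). Hence $\mu_q$ is injective and $\im\mu_q=T_{\sigma_q}\big(\mathrm{GL}(T_qM)\cdot\sigma_q\big)$, the tangent space to the orbit. The constant‑type hypothesis says precisely that the orbit type of $\sigma$ is the same at every point; differentiating a local smooth frame field in which $\sigma$ stays in $\varpi$ shows $(\nabla^0_X\sigma)(q)$ lies in this orbit tangent space, i.e.\ in $\im\mu_q$ (the extra $\nabla^0$‑Christoffel terms are themselves of the form $A\cdot\sigma_q$). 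Therefore the equation has a unique solution $\gamma_X(q)$, depending smoothly on $q$ (a linear system of locally constant rank), and $\nabla^{\sigma}:=\nabla^0+\gamma$ satisfies $\nabla^{\sigma}_X\sigma=0$.

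Uniqueness is automatic from $\ker\mu_q=0$: two connections annihilating $\sigma$ differ by $S\in\Omega^1(M,\mathrm{End}(TM))$ with $S_X\cdot\sigma=0$, forcing $S_X(q)\in\ker\mu_q=0$, so $S=0$. In particular $\nabla^{\sigma}$ is independent of the auxiliary $\nabla^0$, is determined by $\sigma$ alone, and is therefore natural with respect to $\mathcal{G}(M)$. (Intrinsically: constant type turns the $\sigma$‑adapted frames into a reduction $B_\sigma\to M$ of the frame bundle with discrete structure group — the finite automorphism group of a regular binary quartic — and a principal bundle with zero‑dimensional structure group carries a unique principal connection, namely $\nabla^{\sigma}$.) I expect the only genuine obstacle to be the input $\ker\mu_q=0$, i.e.\ the vanishing of the stationary Lie algebra of a regular binary quartic; this is exactly where the regularity of $\sigma$ enters, and the remainder is bookkeeping.
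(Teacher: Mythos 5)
Your proposal is correct, and the converse direction (connection $\Rightarrow$ constant type) is essentially the paper's argument via parallel transport --- in fact yours is slightly cleaner, since transporting along a single path from $q$ to $q'$ already produces the required isomorphism, whereas the paper builds a global parallel frame (which tacitly uses flatness). The forward direction, however, is genuinely different. The paper invokes Corollary 9 of \cite{LYk}: constant type yields, for each pair of points, a \emph{unique} linear isomorphism $A_{p,p'}:T_pM\to T_{p'}M$ carrying $\sigma_{p'}$ to $\sigma_p$, and these canonical identifications are then declared to be the parallel transport of $\nabla^{\sigma}$. You instead fix an auxiliary connection $\nabla^0$ and solve pointwise for the difference tensor $\gamma$ in $\gamma_X\cdot\sigma_q=-(\nabla^0_X\sigma)(q)$, using injectivity of the infinitesimal orbit map $\mu_q:\mathrm{End}(T_qM)\to S^4(T_qM)$ (triviality of the stationary algebra of a regular binary quartic) for uniqueness, and the constant-type hypothesis to place $(\nabla^0_X\sigma)(q)$ in $\im\mu_q$ for solvability. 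Both routes hinge on the same input --- trivial stabilizer of the regular symbol --- but the paper's version delivers flatness of $\nabla^{\sigma}$ for free (the isomorphisms $A_{p,p'}$ are path-independent), which is exactly what its subsequent corollary on vanishing curvature records, whereas your version is more self-contained, makes uniqueness and naturality of $\nabla^{\sigma}$ explicit, and visibly generalizes to any $(n,k)$ with trivial stationary algebra; on the other hand it does not by itself show the curvature vanishes. No gaps; the one step you should perhaps spell out is that the rank of $\mu_q$ is locally constant (it is $4=\dim\mathrm{End}(T_qM)$ by injectivity), so that $\gamma$ is smooth.
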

\begin{proof} 
Suppose that $\sigma\in\Sigma_4(M)$ is regular and has a constant type $\varpi\subset \Sigma_4(M)$.   
Then, (see \cite{LYk},  Corollary 9), for any point $p\in M$ there are a neighborhood ${\EuScript O}_p$ and a unique linear isomorphisms $A_{p,p'} : T_p M \to T_{p'} M$, for all $p' \in {\EuScript O}_p$, such that $\big(A_{p,p'}\big)_*(\sigma_{p'})= \sigma_p$. It follows that there are a unique linear isomorphisms $A_{p,p'} : T_p M \to T_{p'} M$, for all $p' \in M$, such that $\big(A_{p,p'}\big)_*(\sigma_{p'})= \sigma_p$. Therefore, there is a unique linear connection $\nabla^{\sigma}$ on manifold $M$ such that $\nabla^{\sigma}_X(\sigma)=0$ for all vector fields $X$ on $M$. We call it Wagner connection, see \cite{ LY3}.

Inversely, let $\sigma\in\Sigma_4(M)$ and let  $\nabla^{\sigma}$ be its Wagner connection, and $p\in M$. By $\{(e_1) _p,  (e_2) _p\}$ we denote a base in the tangent space $T_p (M)$, then transferring each vector $(e_i)_p$ in parallel along the Wagner connection at every point in $M$, we get the frame $\{e_1,  e_2\}$ on $M$. In the terms of this frame, 
$\sigma=a_0e_1^4+4a_1e_1^3\cdot e_2+6a_2e_1^2\cdot e_2^2+4a_3e_1^1\cdot e_2^3+a_4e_2^4$, where $a_i\in\R$. This means that $\sigma$ has a constant type.
\end{proof}

From construction of Wagner connection, we get
\begin{corollary}
The curvature tensor of the Wagner connection $\nabla^{\sigma}$ is equal to zero. 
\end{corollary}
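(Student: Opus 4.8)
The plan is to read flatness of $\nabla^{\sigma}$ directly off the construction carried out in the proof of the theorem above. There we fixed a point $p\in M$, chose a basis $\{(e_1)_p,(e_2)_p\}$ of $T_p(M)$, and produced a frame $\{e_1,e_2\}$ on $M$ by transporting this basis in parallel with respect to $\nabla^{\sigma}$. The essential point, already established there, is that this frame is \emph{globally well defined}: the parallel transport isomorphism $A_{p,p'}:T_pM\to T_{p'}M$ is uniquely characterized by the algebraic condition $(A_{p,p'})_*(\sigma_{p'})=\sigma_p$, and since $\sigma$ is regular its stabilizer in $\mathrm{GL}(T_{p'}M)$ is trivial, so $A_{p,p'}$ does not depend on the path chosen from $p$ to $p'$.

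First I would record that, by construction, $\{e_1,e_2\}$ is a parallel frame, i.e. $\nabla^{\sigma}_X e_i=0$ for every vector field $X$ on $M$ and $i=1,2$. Then I would invoke the standard fact that a linear connection admitting a parallel frame field is flat: writing the connection $1$-forms $\omega^i_j$ of $\nabla^{\sigma}$ in this frame, the identity $\nabla^{\sigma}e_j=\omega^i_j\otimes e_i=0$ forces $\omega^i_j\equiv 0$, whence the curvature $2$-forms $\Omega^i_j=d\omega^i_j+\omega^i_k\wedge\omega^k_j$ vanish identically, so the curvature tensor is zero. Equivalently, one may argue that path-independence of $A_{p,p'}$ means the holonomy of $\nabla^{\sigma}$ around every loop is trivial, and triviality of the holonomy forces the curvature tensor to vanish.

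Since curvature is a local, pointwise invariant, no global hypotheses on $M$ are needed, and the only substantive ingredient — uniqueness of $A_{p,p'}$ as a consequence of regularity of $\sigma$ — has effectively been supplied upstream in the proof of the theorem. I therefore do not expect a genuine obstacle; the remaining content is just the routine identification ``parallel frame $\Rightarrow$ zero curvature'', and the proof will be correspondingly short.
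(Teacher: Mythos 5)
Your proposal is correct and is essentially the paper's own argument: the paper gives no proof beyond ``from the construction of the Wagner connection,'' and that construction is exactly the path-independent parallel transport (equivalently, the parallel frame $\{e_1,e_2\}$ built in the proof of the theorem), from which vanishing connection forms in that frame, and hence vanishing curvature, follow as you say. One small caveat: regularity of $\sigma$ only forces the stabilizer of $\sigma_p$ in $\mathrm{GL}(T_pM)$ to be \emph{discrete} (trivial stationary Lie algebra), not trivial --- for instance $(x,y)\mapsto(-y,x)$ preserves the regular symbol $\partial_x^3\cdot\partial_y-\partial_x\cdot\partial_y^3$ --- but discreteness already yields the local uniqueness of $A_{p,p'}$ that is cited from \cite{LYk} and path-independence along homotopic paths, which is all that the (local) vanishing of the curvature tensor requires.
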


\subsubsection{Coordinates}

Suppose that the invariant $I_0$ of $\sigma$ is a constant. 

Here we express Christoffel symbols $\Gamma^i_{jk}$  of the Wagner connection $\nabla^{\sigma}$ in terms of coefficients of $\sigma$.

Let $\partial_1=\partial_x$, $\partial_2=\partial_y$. 
Then the symbol $\sigma$ can be rewritten in the form
 $$
  \sigma=a^{i_1\ldots i_4}\partial_{i_1}\cdot\ldots\cdot\partial_{i_4},
$$ 
where is the summation over repeated indices, $i_1,\ldots,i_4=1,2$, and coefficients $a^{i_1\ldots i_4}$ are symmetric in superscripts. Now condition  \eqref{WgnrCnnct} is the following
\begin{equation}\label{EqWgnrCnnct}
  \nabla^{\sigma}_{\partial_l}\,a^{i_1\ldots i_4}\!=\!\partial_l\,a^{i_1\ldots i_4}+\Gamma^{i_1}_{ml}a^{mi_2i_3i_4}+\ldots+\Gamma^{i_4}_{ml}a^{i_1i_2i_3m}\!=\!0,\; l\!=\! 1,2.
\end{equation}

System \eqref{EqWgnrCnnct} consist of 10 linear algebraic equations on 8 unknown functions $\Gamma^i_{jk}$. 

Let us fix some component $a^{k_1 k_2 k_3 k_4}$ of $\sigma$, for example $a^{1 1 2 2}$. Excluding  two equations $\partial_1a^{k_1 k_2 k_3 k_4}+\ldots=0$ and  $\partial_2a^{k_1 k_2 k_3 k_4}+\ldots=0$ from system \eqref{EqWgnrCnnct}, we get the system of 8 equations. 

One can check that the determinant of this system is proportional to the discriminant $\EuScript{D}(\sigma)$ of the symbol $\sigma$ and
therefore this system has a unique solution. 

Moreover, one can check that this solution is independent of choice of the component $a^{k_1 k_2 k_3 k_4}$.

The solution satisfies also to the excluded equations $\partial_1a^{k_1 k_2 k_3 k_4}+\ldots=0$ and  $\partial_2a^{k_1 k_2 k_3 k_4}+\ldots=0$. Indeed, one can check that substituting the solution in the excluded equations, we get two expressions which are $\partial_1 I_0$ and $\partial_2 I_0$ respectively. 

\begin{example} Let regular symbol $\sigma\in\Sigma_4(M)$ be defined in local coordinate by formula \eqref{4RlRts},
$$ 
 \sigma=\partial_x\cdot\partial_y\cdot\big(\alpha_0\partial_x^2+2\alpha_1\partial_x\cdot\partial_y+\alpha_2\partial_y^2\big).
$$ 
Then non zero components $\Gamma^i_{jk}$ of its Wagner connection $\nabla^{\sigma}$ are defined in these coordinates by the formulas:
\begin{align*}
  \Gamma^1_{1\,1}&=(-3\alpha_2\partial_x\alpha_0+\alpha_0\partial_x\alpha_2)/(8\alpha_0\alpha_2),\\
\Gamma^1_{2\,1}&=(-3\alpha_2\partial_y\alpha_0+\alpha_0\partial_y\alpha_2)/(8\alpha_0\alpha_2),\\
\Gamma^2_{1\,2}&=(\alpha_2\partial_x\alpha_0-3\alpha_0\partial_x\alpha_2)/(8\alpha_0\alpha_2),\\
\Gamma^2_{2\,2}&=(\alpha_2\partial_y\alpha_0-3\alpha_0\partial_y\alpha_2)/(8\alpha_0\alpha_2).
\end{align*}
\end{example}

\subsection{Group-type symbols}
Let $M$ be a connected and simply connected manifold, $\sigma$ be a regular symmetric 4-vector, and $\nabla^{\sigma}$ the Wagner connection.  Assume that this connection is complete.

We assume also that the torsion tensor $T^{\sigma}$ of Wagner connection $\nabla^{\sigma}$ is parallel, i.e., 
$$ 
d_{\nabla^{\sigma}} (T^{\sigma}) = 0. 
$$
Then, it is easy to check that the 2-dimensional vector space $\mathfrak g^{\sigma}$ of all parallel vector fields on $M$, is a Lie algebra with respect to the bracket
$$
X, Y \in \mathfrak g^{\sigma}\longrightarrow T^{\sigma}(X, Y)\in\mathfrak g^{\sigma}.
$$

For this 2-dimensional Lie algebra $\mathfrak g^{\sigma}$, we have:
\begin{enumerate}
\item $T^{\sigma}(X,Y)=[X,Y]$ for all $X,Y\in \mathfrak g^{\sigma}$, here $[X,Y]$ is the usual bracket of vector fields,
\item algebra $\mathfrak g^{\sigma}$  is either commutative or solvable,
\item algebra $\mathfrak g^{\sigma}$ is commutative if and only if $T^{\sigma}=0$, and
\item algebra $\mathfrak g^{\sigma}$ is solvable if and only if $T^{\sigma}\neq 0$,
\item if algebra $\mathfrak g^{\sigma}$ is solvable, then there is basis $X, Y\in\mathfrak g^{\sigma}$  such that  $[X, Y]=X$. 
\end{enumerate}

\begin{theorem}
 Let $\sigma\in\Sigma_4(M)$ be a regular symbol and let $\nabla^{\sigma}$ be the corresponding Wagner connection with parallel torsion tensor $T^{\sigma}$. Then:
\begin{enumerate}
\item Symbol $\sigma$ is locally equivalent to the symbol with constant coefficients
$$
   \sigma= c_0\partial_x^4+ 4c_1\partial_x^3\cdot\partial_y + 6c_2\partial_x^2\cdot\partial_y^2
+4c_3\partial_x^3\cdot\partial_y+ c_4\partial_y^4,\quad c_i\in\R,
$$
if and only if $T^{\sigma}=0$.
\item Symbol  $\sigma$ is locally equivalent to the symbol 
\begin{gather*}
 \sigma= c_0e^{4y}\,\partial_x^4+ 4c_1e^{3y}\,\partial_x^3\!\cdot\!\partial_y + 6c_2e^{2y}\,\partial_x^2\!\cdot\!\partial_y^2
+4c_3e^y\,\partial_x\!\cdot\!\partial_y^3+ c_4\partial_y^4,\\
c_i\in\R,
\end{gather*}
if and only if $T^{\sigma}\neq 0$.
\end{enumerate}
\end{theorem}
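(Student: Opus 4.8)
The plan is to reduce everything to the flatness of $\nabla^{\sigma}$ together with the classification of the $2$-dimensional Lie algebra $\mathfrak g^{\sigma}$. Since the curvature of the Wagner connection vanishes (the preceding corollary), around any point of $M$ there is a frame $\{e_1,e_2\}$ of $\nabla^{\sigma}$-parallel vector fields; because $\sigma$ itself is $\nabla^{\sigma}$-parallel, its components in this frame are constants, say
\[
 \sigma=c_0e_1^4+4c_1e_1^3\cdot e_2+6c_2e_1^2\cdot e_2^2+4c_3e_1\cdot e_2^3+c_4e_2^4,\qquad c_i\in\R .
\]
Moreover, $T^{\sigma}$ being parallel, the span $\mathfrak g^{\sigma}$ of $e_1,e_2$ is the Lie algebra described above with $T^{\sigma}(X,Y)=[X,Y]$, so it enjoys the properties (1)--(5) listed before the theorem. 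Thus it remains to straighten $\mathfrak g^{\sigma}$ by a suitable choice of local coordinates and to read off $\sigma$ in the resulting frame, and conversely to exhibit the parallel frame in each model form and compute its bracket.

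For statement (1): if $T^{\sigma}=0$ then, by property (3), $\mathfrak g^{\sigma}$ is commutative, so $e_1,e_2$ are commuting independent vector fields; the simultaneous flow-box theorem gives local coordinates $x,y$ with $e_1=\partial_x$, $e_2=\partial_y$, and in these coordinates $\sigma$ acquires the stated constant coefficients. Conversely, if $\sigma$ is locally equivalent to a constant-coefficient symbol $\sigma_0$, then in the corresponding chart the flat torsion-free connection whose Christoffel symbols all vanish annihilates $\sigma_0$; by uniqueness of the Wagner connection for regular symbols it is $\nabla^{\sigma_0}$, so $T^{\sigma}=0$ on an open set, and being parallel it vanishes identically.

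For statement (2): if $T^{\sigma}\neq 0$ then $\mathfrak g^{\sigma}$ is solvable (property (4)) and admits a basis $X,Y$ with $[X,Y]=X$ (property (5)). I would first straighten $Y$, obtaining coordinates with $Y=\partial_y$; then $[X,\partial_y]=X$ is a linear first-order ordinary differential equation in $y$ for the components of $X$, whose solution is $X=e^{-y}\big(f(x)\partial_x+g(x)\partial_y\big)$, with $f$ nowhere zero because $X,Y$ are independent. A coordinate change of the form $\tilde x=\tilde x(x)$, $\tilde y=y+\psi(x)$, followed by a rescaling of $\tilde x$, normalizes $X$ to $e^{-\tilde y}\partial_{\tilde x}$ while keeping $Y=\partial_{\tilde y}$; after the reflection $\tilde y\mapsto -\tilde y$ (and renaming the constants) one reaches the $\nabla^{\sigma}$-parallel frame $e_1=e^{y}\partial_x$, $e_2=\partial_y$. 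Substituting it into the constant-coefficient expression for $\sigma$ gives precisely the asserted form. Conversely, for that model symbol $\{e^{y}\partial_x,\partial_y\}$ is a parallel frame and $[e^{y}\partial_x,\partial_y]=-e^{y}\partial_x\neq 0$, so $T^{\sigma}\neq 0$.

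The only genuine work will be the coordinate normalization in (2): checking that $\psi$ and the rescaling can be chosen so as to eliminate the $\partial_y$-component of $X$ and turn the remaining factor into the pure exponential $e^{-y}$ --- equivalently, that the $(ax+b)$-Lie algebra $\mathfrak g^{\sigma}$ has, up to local diffeomorphism, a unique transitive realization on the plane. Everything else is bookkeeping: flatness of $\nabla^{\sigma}$ plus parallelism of $\sigma$ force constant coefficients in every parallel frame, and uniqueness of the Wagner connection disposes of both converses.
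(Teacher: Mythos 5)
Your proposal is correct and follows essentially the same route as the paper: both exploit the flatness of the Wagner connection to produce a local parallel frame in which $\sigma$ has constant components, identify the two-dimensional Lie algebra $\mathfrak g^{\sigma}$ via the parallel torsion, and straighten that frame in suitable coordinates (the paper normalizes the solvable case to $\{\partial_x,\,x\partial_x+\partial_y\}$ and solves the resulting system for the coefficients, while you normalize directly to $\{e^{y}\partial_x,\,\partial_y\}$ and read the coefficients off --- the same computation in a different parametrization). Your explicit handling of the two converse implications via uniqueness of the Wagner connection for regular symbols is a welcome addition, since the paper's proof records only the forward directions.
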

\begin{proof} {\rm(1)} The condition $T^{\sigma}=0$ means that [X,Y]=0 for any parallel vector fields $X$ and $Y$ on $M$. 

Let vector fields $X, Y$ be parallel and linearly independent at every point.  Then there exist local coordinates $x,y$ in $M$ such that $X=\partial_x$ and $Y=\partial_y$. 
 
It follows that all components $\Gamma^i_{jk}$ of the Wagner connection $\nabla^{\sigma}$ 
are equal to zero in domain of these coordinates $x,y$.

From equations \eqref{EqWgnrCnnct}, we get that coefficients of $\sigma$ in the coordinates $x,y$ are constants.

{\rm(2)} Let  $T^{\sigma}\neq 0$ and $d_{\nabla^{\sigma}}(T^{\sigma}) = 0$. 
Let vector fields $X, Y$ be a basis in the algebra $\mathfrak g^{\sigma}$ and $[X,Y]=X$. Then  there are local coordinates $x,y$ in $M$ such that $X=\partial_x$ and $Y=\alpha(x,y)\partial_x+\beta(x,y)\partial_y$.
From the condition $[X,Y]=X$, we get that $\alpha(x,y)=x+c$, $c\in\R$, and $\beta$ is independent of $x$. Linear independence of $X$ and $Y$ means that $\beta\neq 0$ everywhere. Thus, we can take $Y=x\partial_x+\partial_y$.

The parallelism of each vector field $X=\partial_x$ and $Y=x\partial_x+\partial_y$ gives us that 
$\Gamma^1_{21}=-1$ and all other components $\Gamma^i_{jk}$ are equal to zero. 

Now, system \eqref{EqWgnrCnnct} is the following,
$$ 
   \partial_x a_i=0,\;\;
  \partial_y a_i-(4-i)a_i=0,\quad  i=0,1,2,3,4.
$$ 
Whence it follows that 
$$ 
   a_0=c_0e^{4y},\; a_1=c_1e^{3y},\; a_2=c_2e^{2y},\; a_3=c_3e^y,\; a_4=c_4,\quad c_i\in\R.
$$ 
\end{proof}

\subsection{Symbols and quantization} 
Let $\Sigma^{\cdot} = \oplus_{k\ge 0}\Sigma^k(M)$ be the gra\-ded algebra of symmetric differential forms and let $\nabla$ be a Wagner connection of a regular symbol from $\Sigma_4(M)$. Then the covariant differential
$$
d_{\nabla}:\Omega^1(M)\longrightarrow \Omega^1(M)\otimes\Omega^1(M)
$$
define derivation 
$$
 d_{\nabla}^s :\Sigma^{\cdot}\longrightarrow\Sigma^{\cdot +1} 
$$
of degree one in graded algebra $\Sigma^{\cdot}$. Namely, these derivations are defined by their actions on generators:
\begin{align*}
&d^s_{\nabla}= d : C^{\infty}(M)\longrightarrow\Omega^1(M) = \Sigma^1,\\
&d^s_{\nabla}:\Omega^1(M)= \Sigma^1\stackrel{d_{\nabla}}{\longrightarrow}\Omega^1(M)\otimes\Omega^1(M)\stackrel{\mathrm{Sym}}{\longrightarrow}\Sigma^2.
\end{align*}

Let now $\sigma_k\in\Sigma_k(M)$ be a symbol. We define a differential operator $ \mathcal Q(\sigma_k)\in\mathbf{Diff}_k(\1)$ as follows:
$$ 
 \mathcal Q(\sigma_k)(h)\stackrel{\mathrm{def}}{=}\frac{1}{k!}\left\langle\,\sigma_k,\,\big(d^s_{\nabla}\big)^k(h)\,\right\rangle
$$ 
where $h\in C^{\infty}(M)$, $\big(d^s_{\nabla}\big)^k(h)\in\Sigma^k(M)$, and $\langle\cdot\, ,\cdot\rangle$ is the natural convolution
$$
  \Sigma_k(M)\otimes\Sigma^k(M)\longrightarrow C^{\infty}(M).
$$

Remark that the value of the symbol of the derivation $d^s_{\nabla}$
at a covector $\theta$ equals the symmetric product by $\theta$ into the module $\Sigma^{\cdot}$.  We get that the symbol of operator $ \mathcal Q(\sigma_k)$ equals $\sigma_k$ because the symbol of a composition of operators equals the composition of symbols.

We call this operator $ \mathcal Q(\sigma_k)$ a quantization of symbol $\sigma_k$.

 Morphism $\mathcal Q:\Sigma_k\to\mathbf{Diff}_k(\1)$ splits exact sequence 
$$ 
 0\to\mathbf{Diff}_{k-1}(\1)\to\mathbf{Diff}_k (\1)\stackrel{\sigma_k}{\longrightarrow}\Sigma^k(M)\to 0
$$ 
by the construction.

Let now $A\in\mathbf{Diff}_4 (\1)$ and $\sigma_4(A)$ be its symbol. Then operator
$$
A-\mathcal Q\big(\sigma_4(A)\big)
$$
has order $3$, and let $\sigma_{3}(A)$ be its symbol.

Then operator $A- \mathcal Q\big(\sigma_4 (A)\big)-\mathcal Q\big(\sigma_{3}(A)\big)$ has order $2$. Repeating this process we get subsymbols $\sigma_i(A)\in\Sigma_i(M)$, $0\le i\le 3$, such that
$$
A = \mathcal Q\big(\sigma_{(4)}(A)\big),
$$
where
$$
\sigma_{(4)}(A) =\sigma_4(A)+\sigma_3(A)+\ldots +\sigma_0(A)
$$
is a total symbol of the operator, and 
$$
\mathcal Q\big(\sigma(A)\big) = \mathcal Q\big(\sigma_4(A)\big) +\mathcal Q\big(\sigma_3(A)\big)+\ldots +\mathcal Q\big(\sigma_0(A)\big).
$$
\subsubsection{Coordinates}
Let $x_1, x_2$ be local coordinates in a neighborhood $\mathcal O\subset M$. Denote by $x_1, x_2, w_1,w_2$ induced standard coordinates in the tangent bundle over $\mathcal O$.

Then $d_{\nabla}(dx_k) = -\sum\Gamma^k_{ij}dx_i\otimes dx_j$, where $\Gamma^k_{ij}$ are the Christoffel symbols of the Wagner connection $\nabla$.

Thus, in coordinates $x,w$ we have $d^s_{\nabla}(w_k) =-\sum \Gamma^k_{ij}w_iw_j$  
and the derivation $d^s_{\nabla}$ has the form:
$$
 d^s_{\nabla} =\sum w_i\partial_{x_i}-\sum\Gamma^k_{ij}w_iw_j\partial_{w_k}.
$$

\subsection{Differential invariants of constant type scalar differential operators} Here we  give,  (see \cite {LYk}), a description of the field of rational differential invariants for a fourth-order linear scalar differential operators of constant type, as well as for its symbol.  

\subsubsection{Differential invariants of a constant type symbol} 

Let \linebreak $\pi : S^4T (M)\to M$ be the bundle of symmetric 4 -vectors (symbols) and let $\nu_4\in\Sigma_4(\pi)$ be the universal symbol (of order 0). We denote by $\mathcal O_0\subset J^0 (\pi)$ the domain of regular symbols. The symbols having the constant type $\varpi$ constitute
a subbundle
$$
\pi^{\varpi} : E^{\varpi}\longrightarrow M
$$
of the bundle $\pi|_{\mathcal O_0} : \mathcal O_0\to M$ of regular symbols.
Then the Wagner connection defines a total covariant differential 
$$
\widehat{d}_{\varpi} : \Sigma^1(\pi^{\varpi})\longrightarrow\Sigma^1(\pi^{\varpi})\otimes \Omega^1(\pi^{\varpi}) ,
$$
over the domain of regular symbols, and, by the construction 
$$
\widehat{d}_{\varpi} (\nu_4) = 0.
$$
Let $T^{\varpi}\in\Omega^2 (\pi^{\varpi})\otimes\Sigma_1 (\pi^{\varpi})$ be the total torsion of the connection and $\theta^{\varpi}\in \Omega^1 (\pi^{\varpi})$ be the torsion form.
Then, applying the total differential of the dual (to Wagner) connection
$$
\widehat{d}_{\varpi}^*: \Omega^1 (\pi^{\varpi})\longrightarrow \Omega^1 (\pi^{\varpi})\otimes\Omega^1 (\pi^{\varpi}),
$$
we get tensor
$$
\widehat{d}_{\varpi}^*(\theta^{\varpi})\in\Omega^1(\pi^{\varpi})\otimes\Omega^1 (\pi^{\varpi}).
$$
Taking the symmetric $g^{\varpi}$ and antisymmetric $a^{\varpi}$ parts of this tensor, we get tensors
$$
g^{\varpi}\in\Sigma^2 (\pi^{\varpi}),\quad a^{\varpi} \in \Omega^2 (\pi^{\varpi}) .
$$
Assuming that tensor $g^{\varpi}$ is non degenerated, we get a total operator
$$
A^{\varpi}\in\Sigma^1(\pi^{\varpi})\otimes\Omega^1 (\pi^{\varpi}) ,
$$
instead of $a^{\varpi}$, and horizontal 1-forms
\begin{equation}\label{Hrznt1Frms}
\theta^{\varpi}_1 = \theta^{\varpi},\quad \theta^{\varpi}_2 = A^{\varpi}(\theta^{\varpi}_1 ).
\end{equation}
Remark that the torsion $T^{\varpi}$ and torsion form $\theta^{\varpi}$ have order 1 and therefore, tensors: $g^{\varpi}$, $a^{\varpi}$, $A^{\varpi}$ and $\theta^{\varpi}_i$
have order 2.

We say that a domain $\mathcal O^{\varpi}_2 \subset J^2 (\pi^{\varpi})$ consists of regular 2-jet of symbols if the tensor $g^{\varpi}$ is non degenerated and
$$ 
\theta^{\varpi}_1\wedge \theta^{\varpi}_2\neq 0.
$$ 

Let $(e^{\varpi}_1, e^{\varpi}_2)$
be the frame of horizontal vector fields $e^{\varpi}_i\in\Sigma_1(\pi^{\varpi})$ dual to coframe $(\theta^{\varpi}_1, \theta^{\varpi}_2)$.  Then coefficients $J^{\varpi}_{\alpha}$
in the decomposition of universal symbol $\nu_k$ in this frame
\begin{equation}\label{RtnlInv}
\nu_4 = \sum_{|\alpha|=4}J^{\varpi}_{\alpha}(e^{\varpi}_1)^{\alpha_1}\cdot (e^{\varpi}_2)^{\alpha_2},
\end{equation}
 are rational functions over regular domain $\mathcal O^{\varpi}_2$ and invariants of the diffeomorphism group $\mathcal G(M)$.
\begin{theorem} The field of rational natural invariants of symbols having degree 4 and constant type $\varpi$ is generated by invariants
$J^{\varpi}_{\alpha}$, $|\alpha|=4$,  and invariant derivations $e^{\varpi}_i$, $i = 1, 2$.
\end{theorem}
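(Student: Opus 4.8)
The statement is a Lie--Tresse type theorem, and I would prove it along the lines of \cite{LYk}. The decisive feature of the constant type situation is that the Wagner connection, being canonically (and naturally) attached to the symbol, produces on the $2$-jet level a canonical horizontal coframe $(\theta^{\varpi}_1,\theta^{\varpi}_2)$; this amounts to a canonical trivialization of the tangent bundle pulled back to $\mathcal O^{\varpi}_2$, so that reading off the components of the universal symbol $\nu_4$ in this coframe turns all jet information about $\sigma$ into scalar invariants. The field of rational natural invariants is then generated by these components $J^{\varpi}_{\alpha}$ and by the invariant derivations $e^{\varpi}_i$ dual to the coframe.

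First I would verify naturality. By the theorem on the Wagner connection and its corollary, $\sigma\mapsto\nabla^{\sigma}$ commutes with the lifted action of $\mathcal{G}(M)$; hence the total covariant differential $\widehat{d}_{\varpi}$, the total torsion $T^{\varpi}$ and its torsion form $\theta^{\varpi}$, the dual differential $\widehat{d}_{\varpi}^{*}$, the tensors $g^{\varpi}$ and $a^{\varpi}$, the operator $A^{\varpi}$, the $1$-forms $\theta^{\varpi}_1,\theta^{\varpi}_2$ of \eqref{Hrznt1Frms} and the dual frame $e^{\varpi}_1,e^{\varpi}_2$ are $\mathcal{G}(M)$-equivariant objects over the corresponding regular domains. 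Consequently the coefficients $J^{\varpi}_{\alpha}$ in the decomposition \eqref{RtnlInv} are rational $\mathcal{G}(M)$-invariant functions on $\mathcal O^{\varpi}_2$, and each $e^{\varpi}_i$ is an invariant derivation, so $e^{\varpi}_i(\mathcal I)$ is again a rational natural invariant (of higher order, as a rule). I would also record the orders: the Christoffel symbols of $\nabla^{\sigma}$ are rational in the $1$-jet of $\sigma$, being the unique solution of the linear algebraic system obtained from \eqref{EqWgnrCnnct}, whose determinant is proportional to $\EuScript{D}(\sigma)\neq 0$; hence $\theta^{\varpi}$ has order $1$, while $g^{\varpi}$, $a^{\varpi}$, $A^{\varpi}$, $\theta^{\varpi}_2$ and $e^{\varpi}_i$ have order $2$ and $J^{\varpi}_{\alpha}$ has order at most $2$.

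The substantive part is the generating property, which I would prove by induction on the jet order $\ell$. The base case, orders $\le 2$, follows from two facts: the identity $\widehat{d}_{\varpi}(\nu_4)=0$, which expresses the first prolongation of the symbol data through the Christoffel symbols and hence through $e^{\varpi}_i$-derivatives of the $J^{\varpi}_{\alpha}$; and the structure equations $d\theta^{\varpi}_i = c^{i}\,\theta^{\varpi}_1\wedge\theta^{\varpi}_2$, whose structure functions $c^{i}$ are themselves $e^{\varpi}_j$-derivatives of the $J^{\varpi}_{\alpha}$. For the inductive step, one differentiates $\widehat{d}_{\varpi}(\nu_4)=0$ and the structure equations along the canonical frame; using that $g^{\varpi}$ is non-degenerate over $\mathcal O^{\varpi}_2$ (so the relevant linear systems are invertible), one expresses every $(\ell+1)$-order jet coordinate of $\sigma$ as a rational function of $e^{\varpi}_i$-derivatives of the $\ell$-order ones. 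Hence every rational natural invariant lies in the field generated by the $J^{\varpi}_{\alpha}$ and their iterated $e^{\varpi}_i$-derivatives. Finally, this field separates the orbits of $\mathcal{G}(M)$ on $\mathcal O^{\varpi}_2$ and on its prolongations, by \cite{KL, Ros}, so it exhausts the field of rational natural invariants.

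The main obstacle is exactly the inductive step: one must check that the prolonged conditions $\widehat{d}_{\varpi}(\nu_4)=0$ together with the structure equations of the canonical coframe genuinely pin down all higher-order jet coordinates of $\sigma$, leaving no residual invariant freedom. This comes down to a rank computation for the symbol of the prolonged equivariant operator, and it is here that the regularity defining $\mathcal O^{\varpi}_2$ --- non-degeneracy of $g^{\varpi}$ and $\theta^{\varpi}_1\wedge\theta^{\varpi}_2\neq 0$ --- is essential; outside this domain the generation statement fails.
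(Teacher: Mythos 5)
Your proposal follows essentially the same route as the paper, which itself only presents the construction (Wagner connection $\to$ torsion form $\to$ tensors $g^{\varpi}$, $A^{\varpi}$ $\to$ canonical coframe $(\theta^{\varpi}_1,\theta^{\varpi}_2)$ and dual frame $e^{\varpi}_i$ $\to$ decomposition \eqref{RtnlInv}) and defers the generating and separating arguments to \cite{LYk} and \cite{KL,Ros}. Your added detail on naturality, on the orders of the constructed tensors, and on the inductive step expressing higher jets through $e^{\varpi}_i$-derivatives of the $J^{\varpi}_{\alpha}$ is consistent with that scheme and correctly identifies the regularity conditions (non-degeneracy of $g^{\varpi}$ and $\theta^{\varpi}_1\wedge\theta^{\varpi}_2\neq 0$) as the point where the argument could fail.
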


\subsubsection{Differential invariants of a constant type scalar operator}
Let $\chi^{\varpi}_4 :\mathrm {Diff}^{\varpi}_4(\1) \to M$ be the bundle of scalar differential operator of order 4 having constant type $\varpi$ and let $\mathbf{Diff}^{\varpi}_4(\1)$ be its module of smooth sections.
By $\widehat{\mathcal O}^{\varpi}_2 \subset J^2 (\chi^{\varpi}_4)$ we denote the domain, where 2-jets of symbols are regular in the above sense, i.e. 2-jets of symbols belong to regular domain  $\mathcal O^{\varpi}_2$.

Denote by $\tau^{\varpi}_4 : S^4T^{\varpi}\to M$ a  bundle of symbols having degree 4 and constant type $\varpi$, and let $\tau_l : S^lT\to M$ be bundles of symbols of degree $l = 0, 1, 2, 3$ and let
$$
\tau_{(4)} = \tau^{\varpi}_4\oplus\tau_3\oplus\tau_2\oplus\tau_1\oplus\tau_0
$$
be the bundle of total symbols with principle symbol having of constant type $\varpi$.

Consider differential operator
$$
\mu_4 : J^{4+1} (\chi^{\varpi}_4)\longrightarrow \tau_{(4)},
$$
which sends differential operators $A\in\mathbf{Diff}_4(\1)$ having regular 2-jet $[A]^2_p\in\widehat{\mathcal O}^{\varpi}_2$ to the total symbol
$$
\sigma_{(4)} (A) = \big( \sigma_4(A), \sigma_3(A) , . . . , \sigma_0(A) \big)
$$
with respect to the Wagner connection that corresponds to the regular principal symbol $\sigma_4(A)$.

It follows from the construction of the Wagner connection that this operator has order 5 and is natural, i.e. commutes with the action of the diffeomorphism group $\mathcal G(M)$.

Regularity conditions allow us to construct invariant coframe \eqref{Hrznt1Frms} and then by decomposing \eqref{RtnlInv} the total symbol in this coframe to find natural rational invariants $J^{\varpi}_{\alpha}$, where $|\alpha|\le 4$, on the 5-jet bundle $J^{5}(\chi^{\varpi}_4)$.

It follows from \eqref{RtnlInv} that invariants $J^{\varpi}_{\alpha}$ and invariant derivations $e^{\varpi}_i$ generate the field of natural invariants of total symbols.

Therefore, applying the prolongations of $\mu_4$
$$
\mu_4^{(l)} : J^{5+l} (\chi_4^{\varpi})\longrightarrow  J^l(\tau_{(4)}),
$$
we will get natural invariants of differential operators of the constant type.

\begin{theorem} The field of natural differential invariants of linear scalar differential operators of order 4 having constant type $\varpi$ is generated by the basic invariants $\mu^*_4(J^{\varpi}_{\alpha})$, $|\alpha| \le 4$, and invariant derivatives $e^{\varpi}_i$, $i = 1, 2$.
\end{theorem}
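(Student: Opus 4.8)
The plan is to reduce the equivalence problem for operators of constant type $\varpi$ to the one for their total symbols, and then to lift the generation statement for symbols (the preceding theorem) to total symbols.

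First I would exploit the quantization map $\mathcal Q$ attached to the Wagner connection. For $A\in\mathbf{Diff}^\varpi_4(\1)$ the connection $\nabla^{\sigma_4(A)}$ is manufactured from the principal symbol alone, and relative to it one peels off the subsymbols and gets $A=\mathcal Q\big(\sigma_{(4)}(A)\big)$. Since $\mathcal Q$ splits the symbol sequence, one checks directly that $\sigma_{(4)}\big(\mathcal Q(\sigma_{(4)})\big)=\sigma_{(4)}$ for every total symbol whose principal part has type $\varpi$; hence $A\mapsto\sigma_{(4)}(A)$ is a bijection between $\mathbf{Diff}^\varpi_4(\1)$ and the sections of $\tau_{(4)}$, with inverse $\sigma_{(4)}\mapsto\mathcal Q(\sigma_{(4)})$. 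Both directions are natural differential operators with rational coefficients --- the forward one is $\mu_4$, of order $5$; the backward one has some finite order --- so their prolongations are, over the regular domain $\widehat{\mathcal O}^\varpi_2$, equivariant maps inverse to each other up to the evident order shift. Consequently $\mu_4^*$ and the $(\mu_4^{(l)})^*$ carry the field of natural rational invariants of total symbols of constant type $\varpi$ isomorphically onto the field of natural rational invariants of operators of constant type $\varpi$, and it suffices to exhibit generators for the former.

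Next I would build those generators exactly as for a symbol. The Wagner connection of $\sigma_4$ produces, through the torsion form and the total differential of the dual connection, the tensors $g^\varpi$ and $a^\varpi$, hence $A^\varpi$ and the invariant horizontal coframe $(\theta^\varpi_1,\theta^\varpi_2)$ of \eqref{Hrznt1Frms}, with dual invariant frame $(e^\varpi_1,e^\varpi_2)$; these have order $2$ in $\sigma_4$ and depend on the principal symbol only. Decomposing each homogeneous component of the total symbol in this frame,
\[
\sigma_k=\sum_{|\alpha|=k}J^\varpi_\alpha\,(e^\varpi_1)^{\alpha_1}\cdot(e^\varpi_2)^{\alpha_2},\qquad 0\le k\le 4,
\]
gives rational functions $J^\varpi_\alpha$, $|\alpha|\le 4$, over the regular domain, each a natural invariant; for $|\alpha|=4$ these are the invariants of the preceding theorem. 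Since $(e^\varpi_1,e^\varpi_2)$ is an invariant frame, the $e^\varpi_i$ are invariant derivations and the iterated $e^\varpi_i$-derivatives of the $J^\varpi_\alpha$ are again invariants. Repeating the Lie--Tresse argument used for the principal symbol --- the invariant coframe separates regular orbits pointwise, so the group action has been reduced to a discrete stabilizer, and then no new functional relations appear at higher jet order, see \cite{KL,Ros} --- shows that the $J^\varpi_\alpha$ with $|\alpha|\le 4$ together with $e^\varpi_1,e^\varpi_2$ generate the whole field of natural rational invariants of total symbols of constant type $\varpi$.

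Finally I would pull back: by the first step $\mu_4^*$ and the $(\mu_4^{(l)})^*$ identify the two invariant fields, so the images $\mu_4^*(J^\varpi_\alpha)$, $|\alpha|\le 4$, together with the invariant derivations $e^\varpi_i$ transported along $\mu_4$ and its prolongations, generate the field of natural differential invariants of operators of constant type $\varpi$, which is the claim. The main obstacle is the middle step: one must check carefully that the invariant coframe together with the zero-order data $J^\varpi_\alpha$, $|\alpha|=4$, already absorbs the full pointwise freedom of $\mathcal G(M)$ --- equivalently, that the stationary algebra of a regular symbol is trivial, as recorded in the introduction --- and then invoke the Lie--Tresse theorem to exclude new relations at higher order; the order shift caused by the order-$5$ operator $\mu_4$ and its inverse is only bookkeeping.
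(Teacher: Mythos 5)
Your proposal is correct and follows essentially the same route as the paper: the paper likewise reduces the problem to total symbols via the quantization splitting and the natural order-$5$ operator $\mu_4$, obtains the invariants $J^{\varpi}_{\alpha}$, $|\alpha|\le 4$, by decomposing the total symbol in the invariant coframe built from the Wagner connection, and pulls everything back through the prolongations of $\mu_4$, citing the Lie--Tresse machinery of \cite{KL,Ros} for generation and separation. Your extra care about the invertibility of $\mu_4$ via $\mathcal Q$ and the triviality of the stabilizer only makes explicit what the paper leaves to the references.
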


\bigskip

It this paper, we did not discuss in detail the  equivalence problem for operators from $\mathbf{Diff}_4(\1)$ and corresponding linear differential equations w.r.t. of the group $\mathcal G(M)$.

Moreover, we omitted a description of field of differential $\mathrm{Aut}(\xi)$-invariants 
for constant type differential operators acting on the line bundle $\xi$. In addition, we omitted a discussion of equivalence problem for these operators and corresponding differential equations w.r.t. of the automorphism group $\mathrm{Aut}(\xi)$.

All of this can be done similar to \cite{LYk}.
\newpage

\textbf{Acknowledgment}\medskip

This work is supported by the Russian Foundation for Basic Research under grant 18-29-10013 mk.
\bigskip


%
\end{document}